\documentclass[a4paper,12pt]{article} 

\topmargin=-1cm
\oddsidemargin=0.2cm
\evensidemargin=0.2cm
\textwidth=16cm
\textheight=22cm 

\title{Extended cyclic codes, maximal arcs and ovoids}
\author{Kanat Abdukhalikov and Duy Ho\\	
UAE University, PO Box 15551, Al Ain, UAE\\
abdukhalik@uaeu.ac.ae, duyho92@gmail.com}

\date{ } 

\usepackage{amsthm} 
\usepackage{amsmath,amssymb} 

\begin{document} 

\maketitle

\theoremstyle{plain} 
\newtheorem{theorem}{Theorem}
\newtheorem{corollary}{Corollary}
\newtheorem{lemma}{Lemma}
\newtheorem{observation}{Observation}
\newtheorem{proposition}{Proposition}

\theoremstyle{definition} 
\newtheorem{definition}{Definition}
\newtheorem{claim}{Claim}
\newtheorem{fact}[theorem]{Fact}
\newtheorem{assumption}{Assumption}
\newtheorem{remark}{Remark}
\newtheorem{example}{Example} 

\begin{abstract}
We  show that extended cyclic codes over $\mathbb{F}_q$ with parameters $[q+2,3,q]$, $q=2^m$, 
determine regular hyperovals. 
We also show that  extended cyclic  codes with parameters $[qt-q+t,3,qt-q]$, $1<t<q$, $q$ is a power of $t$, 
determine (cyclic) Denniston maximal arcs. Similarly, cyclic codes with parameters $[q^2+1,4,q^2-q]$    
are equivalent to ovoid codes obtained from elliptic quadrics in $PG(3,q)$. 
Finally, we give simple presentations of Denniston maximal arcs in $PG(2,q)$ and elliptic quadrics in $PG(3,q)$.  
\end{abstract} 

Keywords:   Extended cyclic codes, MDS codes, hyperovals, maximal arcs, ovoids. 

\section{Introduction} 

In \cite[Chapter 12]{Ding} Ding described a construction of extended cyclic codes $\mathbb{F}_q$ with parameters 
$[q+2,3,q]$, $q=2^m$. 
Codes with these parameters are MDS codes and they determine hyperovals in the projective plane $PG(2,q)$. 
Ding raised a question to find out which hyperovals are obtained by such a construction. 
Furthermore, he also posed a question  which hyperovals can be obtained from arbitrary extended cyclic codes 
with parameters  $[q+2,3,q]$. In the first part of the paper, we answer these questions in Theorem \ref{ext-MDS}. 

In our investigation we use the polar presentation \cite{Ab2017,Ab2019}  of the projective plane $PG(2,q)$. 
With this presentation,  points of a hyperoval can be chosen to be in an affine plane  $AG(2,q)$.  
As a result,  the connection between mentioned cyclic linear codes and hyperovals are described in a natural way and 
related reasonings on properties of  linear codes are greatly simplified. 
 
Generalizing the method applied in the proof of Theorem \ref{ext-MDS}, further we study linear codes $C$ with 
parameters $[qt-q+t,3,qt-q]$, $1<t<q$. 
It is known that if such $C$ is a projective code or a two-weight code, 
then it uniquely determines a maximal arc in $PG(2,q)$ \cite{Calderbank}.  
Moreover, De Winter et al. \cite{Ding2019} constructed a maximal arc $\mathcal K$ that is invariant under a cyclic group 
that fixes one point of $\mathcal K$ and acts regularly on the set of the remaining points of $\mathcal K$ 
(in their construction it is assumed that $t-1$ divides $q-1$ and $t$ divides $q$, or equivalently, $q$ is a power of $t$). 
This maximal arc belongs to the class  of arcs constructed by Denniston \cite{Denniston}. 
Based on the properties of $\mathcal{K}$, De Winter et al. \cite{Ding2019} constructed an extended cyclic  code 
with parameters $[qt-q+t,3,qt-q]$ and nonzero weights $qt-q$ and $qt-q+t$. 

In the converse direction of \cite{Ding2019}, we show that if $C$ is an extended cyclic code with parameters 
$[qt-q+t,3,qt-q]$, $q$ is a power of $t$, then it determines a (cyclic) Denniston maximal arc.  
This result, together with the construction in \cite{Ding2019}, fully characterizes extended cyclic codes with 
parameters $[qt-q+t,3,qt-q]$, where $q$ is a power of $t$. 
We also show that in terms of polar coordinates  the Denniston maximal arcs have very simple presentations, 
and in our particular case the corresponding Denniston maximal arcs can be expressed in a form  
that clearly shows their cyclicity. 

Ding \cite{Ding} introduced a family of cyclic codes with parameters $[q^2+1,4,q^2-q]$ and stated 
without proof that they can be  obtained from elliptic quadrics in $PG(3,q)$. 
We provide a simple proof of this statement. We also introduce a coordinate-free presentation of elliptic 
quadrics in $PG(3,q)$. 

The content of the paper is organized as follows. 
In Section \ref{sec-prelim}, we recall some results on finite geometry and projective linear codes. 
In Section \ref{sec-hyperovals},  we consider extended cyclic codes with parameters  $[q+2,3,q]$  and  their 
connections with hyperovals. 
Likewise, in Section \ref{sec-arcs},  we consider   maximal arcs and extended cyclic codes 
with parameters  $[qt-q+t,3,qt-q]$. 
Further in Section \ref{sec-ovoids} we consider cyclic codes with parameters $[q^2+1,4,q^2-q]$ and their connections 
with elliptic quadrics in $PG(3,q)$. 
\section{Preliminaries}  
\label{sec-prelim} 

Let $F=\mathbb{F}_{2^m}$ be a finite field of order $q=2^m$. Consider $F$ as a subfield of $K=\mathbb{F}_{2^n}$, 
where $n=2m$, so $K$ is a two dimensional vector space over $F$. 

Let $Tr_{M/L}(x)$ and $N_{M/L}(x)$ be the trace and the norm functions with respect to a finite field extension $M/L$. 
The \emph{conjugate} of $x\in K$ over $F$ is
$$\bar{x} =x^q.$$
Then the \emph{trace}  and the \emph{norm} maps from $K$ to $F$ are 
$$T(x) = Tr_{K/F}(x)= x + \bar{x}= x + x^q,$$ 
$$N(x) = N_{K/F} (x) = x \bar{x} = x^{1+q}.$$
The \emph{unit circle} of $K$ is the set of elements of norm $1$:
$$S= \{ u\in K \mid N(u) =1 \}= \{ u\in K \mid  u^{q+1} =1 \}. $$
Therefore, $S$ is the multiplicative group of $(q+1)$th roots of unity in $K$. Note that $\bar{u} =u^{-1}$ for $u\in S$. 
Since $F\cap S = \{ 1\}$, each non-zero element of $K$ has  a unique polar coordinate representation
 $$ x=\lambda u$$
 with $\lambda\in F^*$ and $u \in S$. For any $x\in K^*$ we have $\lambda = \sqrt{x \bar{x}}$, $u= \sqrt{x/ \bar{x}}$. 

Consider points of the projective plane $PG(2,q)$ in homogeneous coordinates as triples $(x ,y,z)$, 
where $x, y, z \in F$, $(x,y,z)\neq (0,0,0)$, 
and we identify $(x,y,z)$ with $(\lambda x , \lambda y , \lambda z)$, $\lambda\in F^*$. 
We shall call points of the form $(x,y,0)$  the points at infinity. 
So $z=0$ indicates the line at infinity (it consists of all points at infinity). 
We define an affine plane $AG(2,q)$ which is obtained by removing all points at infinity from $PG(2,q)$, 
so points of this affine plane $AG(2,q)$ are 
$$\{ (x,y,1) \mid x, y \in F \} .$$
Associating $(x,y,1)$ with $(x,y)$ we can identify points of the affine plane $AG(2,q)$ with  elements 
of the vector space  
$$V(2,q) = \{ (x,y) \mid x, y \in F \} .$$

We introduce now another representation of $PG(2,q)$ using the field $K$. 
Consider pairs $(x,z)$, where $x \in K$, $z\in F$, $x\neq 0$ or 
$z\neq 0$, and we identify $(x,z)$ with $(\lambda x ,\lambda z)$, $\lambda\in F^*$. 
Then points of $PG(2,q)$ are 
$$\{ (x ,1) \mid \ x \in K \} \cup \{ (u,0) \mid u\in S\}.$$
The elements from $\{ (u,0) \mid u\in S\}$,  will be referred to as the points at infinity. 
An affine plane $AG(2,q)$ is obtained by removing all points at infinity from $PG(2,q)$, so the points of this 
affine plane $AG(2,q)$ are 
$$\{ (x , 1) \mid x \in K \} .$$
Associating $(x , 1)$ with $x \in K$ we can identify the points of the affine plane $AG(2,q)$ with the elements of 
the field $K$. 

Let $\xi \in K\setminus F$. Then $\{ 1, \xi\}$ is a basis of $K$ over F.  If for $w\in K$ we have $w =x+y\xi$, $x, y \in F$, 
then point $(w,1)$ can be written as $(x,y,1)$ in traditional coordinates.

An \emph{oval} in $PG(2,q)$ is a set of $q+1$ points, no three of which are collinear.  
Any line of the plane meets the oval $\mathcal{O}$ at either
0, 1 or 2 points and is called an exterior, tangent or secant, respectively. 
All the tangent lines to the oval $\mathcal{O}$ concur  \cite{Hir} at the same point $N$, called the \emph{nucleus} 
 of $\mathcal{O}$. The set $\mathcal{H}=\mathcal{O} \cup \{ N\}$ becomes a \emph{hyperoval}, that is a set of $q+2$
points, no three of which are collinear. 
Conversely, by removing any point from a hyperoval one gets an oval. 

The set $S\cup \{ 0\}$ is a regular hyperoval (hyperconic) in $K$, considered as an affine plane over $F$ 
(see \cite{Ab2019}).   

Let $C$ be a linear $[n,3]$-code with a generator matrix $G$. Let $G$ contain no zero columns. Consider 
columns $P_j$ of the matrix $G$ as generators of points in $PG(2,q)$. Then the multi-set 
$\mathcal{P} = \{\{P_1, P_2, \dots, P_n \}\} $ contains $n$ points from $PG(2,q)$ (some of them may be equal). 
The following lemma will be very useful in our investigations (\cite[Theorem 2.36]{Ding} or  \cite[Lemma 4.15]{Ball}).

\begin{lemma} 
\label{main} 
The multi-set $\mathcal{P}$ of columns of a generator matrix $G$ of a $[n,3,n-t]$-code over $F$ is a multi-set 
of points of $PG(2,q)$ in which every line of $PG(2,q)$ contains at most $t$ points of $\mathcal{P}$, and some 
line of $PG(2,q)$ contains exactly $t$ points of $\mathcal{P}$. 
\end{lemma}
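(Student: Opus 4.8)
The plan is to exploit the standard duality between the codewords of an $[n,3]$-code and the lines of $PG(2,q)$. Write the columns of $G$ as $P_1,\dots,P_n$, regarded as nonzero vectors in $F^3$ (none is zero, by hypothesis). Since $G$ has rank $3$, the map $\mathbf{a}\mapsto \mathbf{a}G$ is a bijection from $F^3$ onto the code $C$, and a codeword $\mathbf{c}=\mathbf{a}G$ is nonzero exactly when $\mathbf{a}\neq\mathbf{0}$. The key observation I would build everything on is that the $j$-th coordinate of $\mathbf{a}G$ is the inner product $\mathbf{a}\cdot P_j$, which vanishes precisely when the point $P_j$ lies on the line $\ell_{\mathbf{a}}$ of $PG(2,q)$ with equation $\mathbf{a}\cdot X=0$.

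First I would record the two facts underlying this correspondence. On the one hand, as $\mathbf{a}$ ranges over the nonzero vectors of $F^3$ taken up to a scalar in $F^*$, the line $\ell_{\mathbf{a}}$ ranges over every line of $PG(2,q)$ exactly once, because scaling $\mathbf{a}$ by $\lambda\in F^*$ scales the codeword by $\lambda$ and so leaves its support—hence $\ell_{\mathbf{a}}$—unchanged. On the other hand, counting coordinates with multiplicity (to respect the multi-set structure of $\mathcal{P}$) gives
$$\#\{\,j : P_j\in \ell_{\mathbf{a}}\,\}=n-\mathrm{wt}(\mathbf{a}G),$$
so that the number of points of $\mathcal{P}$ lying on $\ell_{\mathbf{a}}$ equals $n$ minus the weight of the associated codeword.

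Then the lemma follows by translating the minimum-distance hypothesis $d=n-t$. For any nonzero $\mathbf{a}$ the codeword $\mathbf{a}G$ has weight at least $n-t$, so the displayed identity forces at most $t$ points of $\mathcal{P}$ on $\ell_{\mathbf{a}}$; since every line of $PG(2,q)$ arises as some $\ell_{\mathbf{a}}$, every line contains at most $t$ points of $\mathcal{P}$. For the sharpness I would pick a minimum-weight codeword $\mathbf{a}_0 G$, which exists and has weight exactly $n-t$; the corresponding line $\ell_{\mathbf{a}_0}$ then meets $\mathcal{P}$ in exactly $t$ points.

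There is no genuine obstacle here: the argument is a dictionary translation between the algebra of the code and the incidence geometry of the plane. The only point requiring real care is the bookkeeping for the multi-set $\mathcal{P}$, where coincident columns must be counted with multiplicity, so that repeated points on a line correspond exactly to repeated zero coordinates in the codeword; getting this counting right is what guarantees that the inequality and its sharpness transfer faithfully between the two settings.
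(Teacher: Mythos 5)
Your proof is correct and is exactly the standard duality argument (nonzero codewords $\mathbf{a}G$ up to scalars $\leftrightarrow$ lines $\mathbf{a}\cdot X=0$, with $\mathrm{wt}(\mathbf{a}G)=n-\#\{j: P_j\in\ell_{\mathbf{a}}\}$); note that the paper itself does not prove this lemma but cites it from Ding (Theorem 2.36) and Ball (Lemma 4.15), where the same argument appears. The one point you handle correctly and explicitly — counting coincident columns with multiplicity so that repeated points correspond to repeated zero coordinates — is indeed the only place where care is needed.
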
 

A linear code $[n,3]$-code $C$ is called {\it projective} if the columns of its generator matrix generate different points in 
$PG(2,q)$. It means that the dual code $C^\perp$ does not contain words of weight 2. 
The assumption that no column of $G$ is 0 means the absence of weight 1 words in $C^\perp$. 
A code $C$ is projective if and only if its dual code $C^\perp$ has minimum distance $d^\perp \ge 3$. 

We say that two codes are equivalent if one can be obtained from the other by a permutation of the coordinates. 


\section{Extended cyclic codes and hyperovals}
\label{sec-hyperovals}

Let $S=\langle \beta \rangle$. 
Let $C$ be a cyclic code over $K$ of length $q+1$ with the generator polynomial $x-\beta$. 
Then $C$ is a $[q+1,q,2]$ MDS code over $K$.  
The subfield subcode $C_F$ over $F$ is generated by the polynomial 
$(x-\beta)(x-\bar{\beta}) = x^2 +T(\beta)x +1$, so $\beta$ and $\bar{\beta}$ are zeroes of the cyclic code $C_F$. 
Define $1\times (q+1)$ matrix  
$$
L=\left(
\begin{array}{ccccc}
1 & \beta & \beta^2 & \dots & \beta^q  
 \end{array}
\right).  
$$
Then by \cite[Theorem 4.4.3]{Huffman} we have 
$$ C_F = \{ c \in F^{q+1} \mid L c^T =0 \}. $$ 

Let $\xi \in K\setminus F$. Then $\{ 1, \xi\}$ is a basis of $K$ over F and let $\beta^i =x_i+y_i\xi$, $x_i, y_i \in F$.

Theorems 12.25-12.26 in \cite{Ding} presented a sequence of codes, we reproduce this sequence using 
polar coordinate representation of hyperovals. 

1) Theorem 12.25 in \cite{Ding} states that  $C_F$ is a $[q+1,q-1,3]$ MDS code. Its  parity-check matrix is  
$$
\left(
\begin{array}{cccc}
 x_0 & x_1 & \dots & x_q  \\
 y_0 & y_1 & \dots & y_q  
 \end{array}
\right); 
$$

2) Theorem 12.26 in \cite{Ding} states that the extended code $\overline{C_F}$ is a $[q+2,q-1,4]$ MDS code.  
Its parity-check matrix is  
$$
A= \left(
\begin{array}{ccccc}
 x_0 & x_1 & \dots & x_q  & 0 \\
 y_0 & y_1 & \dots & y_q  & 0 \\
  1 & 1 & \dots & 1  & 1
\end{array}
\right), 
$$
where the set 
$$\{ (x_0,y_0,1), (x_1,y_1,1), \dots, (x_q,y_q,1), (0,0,1)\}$$ 
is a regular hyperoval \cite{Ab2019}, corresponding to $S\cup \{ 0\}$;   

3) Theorem 12.26 in \cite{Ding} also states that the dual code $(\overline{C_F})^\perp$ is a $[q+2,3,q]$ MDS code.  
Clearly its generator matrix is $A$. 
  
In Problem 12.1 in \cite{Ding} it was posed as a question whether the hyperoval defined by the code 
$(\overline{C_F})^\perp$ is equivalent to a known hyperoval. 
We see that  the mentioned hyperoval is equivalent to the regular hyperoval $S\cup \{ 0\}$ (hyperconic). 

In Problem 12.3 in \cite{Ding} it was asked to characterize all extended cyclic codes over $F$ with parameters 
$[q+2,3,q]$. The  next theorem shows that all these codes are equivalent to  MDS codes obtained from 
the regular hyperovals. 

\begin{theorem} 
\label{ext-MDS} 
An extended cyclic code over $\mathbb{F}_q$ with parameters $[q+2,3,q]$ is equivalent to 
an MDS code obtained from a regular hyperoval. 
\end{theorem}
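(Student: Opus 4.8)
The plan is to convert the code-theoretic hypothesis into a statement about a hyperoval that carries a cyclic collineation group, and then to determine all hyperovals admitting such a group.

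First I would extract the geometry. Applying Lemma \ref{main} with $n=q+2$ and $d=q$ gives $t=n-d=2$, so the $q+2$ columns of a generator matrix $G$ form a set of $q+2$ points of $PG(2,q)$ with at most two on any line, i.e.\ a hyperoval $\mathcal{H}$; write $P_{q+1}$ for the point coming from the extension coordinate. Since $C$ is an \emph{extended} cyclic code, the cyclic shift $\rho$ on the $q+1$ non-extended coordinates, fixing the extension coordinate, is a permutation automorphism of $C$. As $G$ and the column-permuted matrix $G^{\rho}$ generate the same code, there is $M\in GL(3,q)$ with $G^{\rho}=MG$, and the induced $\bar{M}\in PGL(3,q)$ permutes $\mathcal{H}$ exactly as $\rho$ permutes coordinates: it fixes $P_{q+1}$ and moves the remaining $q+1$ points in a single $(q+1)$-cycle. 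From $\rho^{q+1}=\mathrm{id}$ and $\mathrm{rank}\,G=3$ one gets $M^{q+1}=I$, while the presence of a $(q+1)$-cycle forces the order of $\bar{M}$ in $PGL(3,q)$ to be exactly $q+1$.

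The heart of the argument is then to classify elements of order $q+1$ in $PGL(3,q)$. Lifting $\bar{M}$ to $M\in GL(3,q)$ and splitting by the factorization of its characteristic polynomial over $F$: an irreducible cubic would place $\bar M$ inside the image of a Singer cycle, of order $q^2+q+1$; a completely split polynomial would give, after discarding the unipotent part (of $2$-power order, hence trivial since $q+1$ is odd), an element of order dividing $q-1$. Both are impossible because, for $q=2^m$, one has $\gcd(q+1,q^2+q+1)=1$ and $\gcd(q+1,q-1)=1$. Therefore the characteristic polynomial is (linear)$\times$(irreducible quadratic), so $\bar{M}$ fixes a unique point $N_0$ and a unique line $\ell_0$ with $N_0\notin\ell_0$, acting fixed-point-freely as a single $(q+1)$-cycle on the $q+1$ points of $\ell_0$.

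Finally I would normalize. Applying a projective transformation (which only replaces $\mathcal{H}$ by a projectively equivalent hyperoval), I may assume, in the model with $x\in K$, $z\in F$, that $N_0=(0,1)$, that $\ell_0=\{z=0\}$ is the line at infinity, and that $\bar{M}$ is multiplication by a generator $\beta$ of $S$, namely $(x,z)\mapsto(\beta x,z)$. Its only fixed point is $N_0=0$, and its orbits of length $q+1$ are exactly $\ell_0$ together with the circles $\lambda S=\{(\lambda u,1):u\in S\}$ for $\lambda\in F^{*}$. Since $\bar{M}$ fixes $P_{q+1}$, we get $P_{q+1}=N_0=0$, and $\mathcal{H}\setminus\{P_{q+1}\}$ is a single length-$(q+1)$ orbit. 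This orbit cannot be $\ell_0$, whose $q+1$ points are collinear, contradicting the hyperoval property; hence it is some circle $\lambda S$, so $\mathcal{H}=\{0\}\cup\lambda S$. The $F$-linear map $x\mapsto\lambda^{-1}x$ carries this onto $\{0\}\cup S$, the regular hyperoval of the excerpt, establishing the claimed equivalence. I expect the classification of order-$(q+1)$ elements of $PGL(3,q)$ in the second step to be the main obstacle; the two coprimality facts are what make it go through cleanly.
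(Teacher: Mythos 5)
Your argument is correct, but it follows a genuinely different route from the paper's. The paper never invokes the automorphism group of the code: it punctures $C$ to a cyclic $[q+1,3]$-code $C'$, notes that over $\mathbb{F}_q$ the polynomial $x^{q+1}-1$ factors as $(x-1)$ times irreducible quadratics $(x-\gamma)(x-\bar\gamma)$ with $\gamma\in S$, so the degree-$3$ check polynomial of $C'$ is forced to be $(x-1)(x-\gamma)(x-\bar\gamma)$; this immediately yields an explicit generator matrix of $C$ whose columns are the points $(\gamma^i,1)$, $0\le i\le q$, together with $(0,1)$, and the fact that $C^\perp$ is MDS with minimum distance $4$ rules out repeated columns, forcing $\gamma$ to generate $S$ and the column set to be the hyperconic $S\cup\{0\}$. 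You instead turn the cyclic shift into an element $\bar M$ of order exactly $q+1$ in $PGL(3,q)$ stabilizing the hyperoval, classify such elements by the factorization type of the characteristic polynomial (the coprimalities $\gcd(q+1,q^2+q+1)=1$ and $\gcd(q+1,q-1)=1$ killing the Singer and split cases), and then read off the length-$(q+1)$ affine orbits as the circles $\lambda S$. Your route is heavier — it needs the $PGL(3,q)$ classification, plus two details you leave implicit: the $q+2$ columns must be shown distinct before the induced permutation of the point set is honestly a $(q+1)$-cycle, and the eigenvalue of $M$ at the fixed point must be shown equal to $1$ (again via $\gcd(q+1,q-1)=1$) before $\bar M$ can be normalized to multiplication by a generator of $S$. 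What it buys is generality and geometric transparency: it applies to any $[q+2,3,q]$-code admitting a permutation automorphism with this cycle structure, not only to literal extensions of cyclic codes, whereas the paper's argument is shorter because the cyclic-code machinery (zeros and check polynomials) does the classification for free.
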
 

\begin{proof} 
Let $C$ be an extended cyclic $[q+2,3,q]$-code. The parameters of the code show that $C$ is a MDS code. 
Then $C^\perp$ is  a MDS $[q+2,q-1,4]$-code \cite{Huffman}. 
Let $C'$ denote the code obtained by puncturing the extended coordinate in $C$. 
Then $C'$ is the original cyclic $[q+1,3]$ code. 
We will show that $C'$ has parity-check polynomial  $(x-\gamma)(x-\bar{\gamma})(x-1)$ for some $\gamma \in S$, 
$\gamma \ne 1$. 
Indeed, the factorization of $x^{q+1}-1$ over $K$ is given by 
$$x^{q+1}-1 = \prod_{u\in S} (x-u)= \prod_{i=0}^q (x-\beta^i),$$
where $\beta \in K$ is a primitive $(q+1)$-th root of unity and $S=\langle \beta \rangle$. 
Note that $(x-\beta^i)(x-\beta^{-i})=x^2-T(\beta^i)x+1\in F[x]$ is irreducible over $F$ for all $i$ with $1\le i\le q/2$, 
as $\beta^i \in S$. 
Then the corresponding factorization of $x^{q+1}-1$ over $F$ is given by 
$$x^{q+1}-1 = (x-1) \prod_{i=1}^{q/2} (x^2-T(\beta^i)x+1).$$
Consequently, every polynomial of degree 3 over $F$ dividing $x^{q+1}-1 $ must be of the form 
$(x-\gamma)(x-\bar{\gamma})(x-1)$ with $\gamma \in S\setminus \{1\}$.

Therefore, $(C')^\perp$ is a cyclic $[q+1,q-2]$-code with generator polynomial $(x-\gamma)(x-\bar{\gamma})(x-1)$. 
Define the following $2\times (q+1)$ matrix  over $K$: 
$$
L=\left(
\begin{array}{ccccc}
1 & \gamma & \gamma^2 & \dots & \gamma^q  \\
1 & 1 & 1 & \dots & 1
 \end{array}
\right).  
$$
Then by \cite[Theorem 4.4.3]{Huffman} we have 
$$ (C')^\perp = \{ c \in F^{q+1} \mid L c^T =0 \}. $$ 

 Let $\xi \in K \setminus F$ and $\gamma^i =x_i+y_i\xi$, where $x_i$ and  $y_i$ are from $F$. 
 Then $(C')^\perp$ is a cyclic code with a parity-check matrix 
$$
B=\left(
\begin{array}{cccc}
 x_0 & x_1 & \dots & x_q   \\
 y_0 & y_1 & \dots & y_q   \\
  1 & 1 & \dots & 1  
\end{array}
\right). 
$$
This means that  $C'$ is a cyclic code over $F$ with generator matrix $B$. Since 
$$0= \sum_{i=0}^q \gamma^i = \sum_{i=0}^q x_i + \xi\sum_{i=0}^q y_i,$$ 
we have 
$$\sum_{i=0}^q x_i = \sum_{i=0}^q y_i =0.$$
 
Since $C$ is an extended code of $C'$,  $C$ has the following generator matrix 
$$
A=\left(
\begin{array}{ccccc}
 x_0 & x_1 & \dots & x_q  & 0 \\
 y_0 & y_1 & \dots & y_q  & 0 \\
  1 & 1 & \dots & 1  & 1
\end{array}
\right). 
$$     
Since $C^\perp$ is  an MDS $[q+2,q-1,4]$-code  with parity-check matrix $A$, all elements $\gamma^i$, $0\le i\le q$, 
must be distinct (otherwise two columns of $A$ will be identical, and $C^\perp$ would have a codeword of weight 2), 
so $S=\langle \gamma \rangle$. Hence the columns of $A$ define a regular hyperoval.  \qed 
\end{proof}

Moreover, in Problem 12.2 in \cite{Ding} it was posed as a question whether every MDS code over $F$ with parameters 
$[q+2,3,q]$ is an extended cyclic code. We see that the answer to this question is negative, since there are 
hyperovals not equivalent to the regular hyperovals. 

\section{Extended cyclic codes and maximal arcs} 
\label{sec-arcs} 

In this section we show that results from Section \ref{sec-hyperovals} can be extended to maximal arcs. 

A $\{k;t\}$-arc in $PG(2,q)$ is a set $\mathcal K$ of $k$ points such that $t$ is the maximum number of points in 
$\mathcal K$ that are collinear. It is known that $k\le (q+1)(t-1)+1$. 
A $\{k;t\}$-arc in $PG(2,q)$ with $k = (q+1)(t-1)+1$ is called a \textit{maximal arc}. 
If $\mathcal K$ is a maximal $\{k;t\}$-arc in $PG(2,q)$ and $1<t < q$ then $q$ is even, $t$ is a divisor of $q$, 
and every line in $PG(2,q)$ intersects $\mathcal K$ in 0 or $t$ points. 
The $\{q+2;2\}$-arcs in $PG(2,q)$ are hyperovals. 

We recall a construction of maximal arcs proposed by Denniston \cite{Denniston}. 
Choose $\delta \in F$ such that the polynomial $X^2 + \delta X + 1$ is irreducible over $F$. 
For each $\lambda \in F$ consider the quadratic curve $D_{\lambda}$  in $AG(2,q)$ defined by the  equation 
$X^2 + \delta XY + Y^2 =\lambda$. If $\lambda \ne 0$ then $D{_\lambda}$ is a conic and its nucleus is the point $(0,0)$. 
If  $\lambda = 0$ then $D_{\lambda}$ consists of the single point $(0,0)$. Let $\Delta \subseteq F$. 
Then the set 
\begin{equation} 
\label{Denniston} 
D = \bigcup_{\lambda \in \Delta} D_\lambda 
\end{equation} 
is a maximal arc in $AG(2,q)$ (and therefore in $PG(2,q)$) if and only if $\Delta$ is a subgroup  of the additive group  
of $F$ \cite{Abatangelo,Denniston}.  In this case $D$ is a maximal $\{qt-q+t;t\}$-arc with $t=|\Delta |$. 

The next theorem shows that in terms of polar coordinates the Denniston maximal arcs can be expressed in a very simple way. 

\begin{theorem} 
The Denniston maximal arcs $(\ref{Denniston})$ can be expressed as 
\begin{equation} 
\label{Denniston2} D = \bigcup_{\lambda \in \Lambda} \lambda S \subset K,
\end{equation} 
where $\Lambda$ is a subgroup of the additive group  of the  field $F$ and $S$ is the unit circle of $K$. 
\end{theorem}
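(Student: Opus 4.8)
The plan is to choose the basis element $\xi$ so that the Denniston quadratic form is literally the norm form $N$ on $K$. First I would take $\xi$ to be a root of the polynomial $X^2+\delta X+1$. Since this polynomial is irreducible over $F$ (that is the hypothesis in Denniston's construction), its roots lie in $K\setminus F$, so $\{1,\xi\}$ is indeed a basis of $K$ over $F$ and may be used in the identification $(x,y)\leftrightarrow w=x+y\xi$ from the preliminaries. Reading off the coefficients, $\xi\bar\xi=1$ and $\xi+\bar\xi=\delta$, i.e. $N(\xi)=1$ (so $\xi\in S$) and $T(\xi)=\delta$.

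Next I would compute the norm of a generic affine point. Because conjugation fixes $F$ and sends $\xi$ to $\bar\xi$, for $w=x+y\xi$ we get $N(w)=w\bar w=(x+y\xi)(x+y\bar\xi)=x^2+T(\xi)\,xy+N(\xi)\,y^2=x^2+\delta xy+y^2$, with no sign subtleties in characteristic $2$. Hence, under the identification of $AG(2,q)$ with $K$, the curve $D_\lambda$ defined by $X^2+\delta XY+Y^2=\lambda$ is exactly the norm fiber $\{\,w\in K\mid N(w)=\lambda\,\}$.

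To put these fibers in polar form I would write $w=\mu u$ with $\mu\in F^*$ and $u\in S$, as every nonzero element of $K$ admits such a representation. Then $N(w)=N(\mu)N(u)=\mu^2$, since $N(\mu)=\mu^{q+1}=\mu^2$ for $\mu\in F$ and $N(u)=1$. As squaring is a bijection of $F$, for $\lambda\ne 0$ the condition $N(w)=\lambda$ forces $\mu=\sqrt{\lambda}$, so $D_\lambda=\sqrt{\lambda}\,S$; and $D_0=\{0\}=0\cdot S$ because the norm vanishes only at $0$. Therefore $D=\bigcup_{\lambda\in\Delta}\sqrt{\lambda}\,S=\bigcup_{\mu\in\Lambda}\mu S$, where $\Lambda=\{\sqrt{\lambda}\mid\lambda\in\Delta\}$.

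The final step is to verify that $\Lambda$ is an additive subgroup of $F$. The square-root map $x\mapsto\sqrt{x}=x^{2^{m-1}}$ is the inverse of the Frobenius $x\mapsto x^2$, hence an additive automorphism of $F$; being the image of the additive subgroup $\Delta$ under this automorphism, $\Lambda$ is again a subgroup of $(F,+)$, which is precisely what the statement asserts. I expect the only genuine subtlety to be the opening move—matching the norm form to the Denniston form by the correct choice of $\xi$—together with the bookkeeping of the reindexing $\lambda\mapsto\sqrt{\lambda}$ and the treatment of the degenerate fiber $\lambda=0$; the remaining computations are routine.
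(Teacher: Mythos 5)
Your proposal is correct and follows essentially the same route as the paper: identify a root of $X^2+\delta X+1$ as an element of the unit circle $S$, observe that the Denniston form $x^2+\delta xy+y^2$ is exactly the norm $N$, and note that the fiber $N(w)=\lambda^2$ is the circle $\lambda S$, with $\Delta$ and $\Lambda$ related by the (additive) squaring bijection of $F$. Your extra checks — that $\Lambda$ is an additive subgroup because $x\mapsto\sqrt{x}$ is an additive automorphism, and the explicit treatment of the fiber $\lambda=0$ — are details the paper leaves implicit.
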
 

\begin{proof} 
Let the polynomial  $X^2 + \delta X + 1$ be irreducible over $F$. It has two distinct conjugate roots 
$u$ and $\bar{u}$ in $K$, where $u\bar{u}=1$ and $u+\bar{u}=\delta$. 
Therefore, $u^{q+1}=u\bar{u}=1$ and $u \in S$, $u\ne 1$. Any element $z\in K$ can be written as $z=x+yu$, 
where $x, y \in F$. Then 
$$N(z) = N(x+yu) = (x+yu)(x+y\bar{u}) = x^2 + (u+\bar{u})xy + y^2 = x^2 + \delta xy + y^2.$$
Then  the set $\lambda S = \{ z\in K \mid N(z) = \lambda^2 \}$ corresponds to the set $D_{{\lambda}^2}$  and 
$\Delta = \{ \lambda^2 \mid \lambda \in \Lambda \}$.  \qed 
\end{proof} 

Clearly $D$ is a union of $|\Lambda|-1$ circles $\lambda S$ and the element $0$. 

Let $t-1$ divide $q-1$. De Winter et al. \cite[Theorem 1]{Ding2019} considered a Denniston $\{qt-q+t;t\}$-arc $D$ 
that is invariant under a cyclic group $G$ that fixes one point of $D$ and acts regularly on the set of the remaining 
points of $D$. Since $t=2^r$ for some $r$, condition $(t-1) |(q-1)$ means that $q$ is a power of $t$.  
Let $\mathbb{F}_t$ be the unique subfield of order $t$ of $F$.  
Choose  $\Delta$ as the additive subgroup of the field  $\mathbb{F}_t$. 
Cyclic group $G$ is given as $G=G_1 \times G_2$, where   
$$
G_1=\left\{ \left(
\begin{array}{cc}
 a+\delta b & b \\
 b & a   
\end{array}
\right) : a, b \in F, \ a^2+\delta ab +b^2 =1 \right\},  
$$
$$
G_2=\left\{ \left(
\begin{array}{cc}
 c & 0 \\
 0 & c  
\end{array}
\right) : c \in \mathbb{F}_t^* \right\}. 
$$
$G_1$ is a cyclic group of order $q+1$ and $G_2$ is a cyclic group of order $t-1$. Since $\gcd(q+1,t-1)=1$, $G$ is a 
cyclic group of order $(q+1)(t-1)$.

Now we consider this Denniston arc in terms of polar coordinates. 
Let $N=(q+1)(t-1)$,  and let $S'$ be the group of $N$-th roots of unity in $K$.  
If $\theta$ is a primitive element of the field $K$ then $S' =\langle \alpha \rangle$, where $\alpha = \theta^{(q-1)/(t-1)}$. 
We denote also $\beta = \theta^{q-1}$, thus $S =\langle \beta \rangle$. In polar coordinates we have 
$\alpha = \mu v$, where $\mu \in F$ and $v\in S$. 
Since $1 = \alpha^{(q+1)(t-1)} = (\mu v)^{(q+1)(t-1)} =\mu^{(q+1)(t-1)} = \mu^{2(t-1)}$, 
one has $ \mu^{t-1}=1$ and $\mu \in \mathbb{F}_t$. 
Therefore, $\mu$ is a primitive element  of $\mathbb{F}_t$ and 
$\langle \alpha \rangle = \langle \mu \rangle  \langle v \rangle$.  

Let $\Lambda =\mathbb{F}_t^+$ and define 
$$D = \bigcup_{\lambda \in \Lambda} \lambda S \subset K.$$ 
Then 
\begin{eqnarray*}
D 
&  = & \{ 0\} \cup S \cup  \alpha S \cup \dots \cup \alpha^{t-2} S \\
&  = &  \{0, 1, \alpha, \alpha^2, \dots, \alpha^{N-1} \} ,
\end{eqnarray*}
since $\mu^i S = \alpha^{i}S$ and $S=\langle \alpha^{t-1} \rangle$.   
This  maximal arc $D$ corresponds to the Denniston arc considered in \cite{Ding2019}. In this form we immediately 
see the cyclicity of $D$, and we call such an arc as {\it cyclic} Denniston maximal arc. 
Note that instead of $\alpha$ one can take any generator of $S'$.

Let $\alpha^i=x_i+y_i \xi$, $x_i,y_i \in F$. De Winter et al. \cite{Ding2019}  considered an extended cyclic code 
which, in fact, is equivalent to a code $C$ with generator matrix 
$$
A=\left(
\begin{array}{ccccc}
	x_0 & x_1 & \dots & x_{N-1}  & 0 \\
	y_0 & y_1 & \dots & y_{N-1}  & 0 \\
	1 & 1 & \dots & 1  & 1
\end{array}. 
\right) 
$$  
They showed that $C$ is a code with parameters $[N+1,3,N+1-t]$ and nonzero weights $N+1-t$ and $N+1$ 
(it can be also deduced using Lemma \ref{main}). Furthermore, the dual minimum distance $d^\perp$ of the code 
$C$ is $3$ when $t>2$ and $4$ when $t =2$ (hyperoval case). 

We consider now the reverse process. 

\begin{theorem}  
\label{max-arc}
An extended cyclic code over $\mathbb{F}_q$ with parameters $[qt-q+t,3,qt-q]$, $1< t <q$,  $q$ is a power of $t$, 
is equivalent to a code obtained from a cyclic Denniston maximal arc. 
\end{theorem}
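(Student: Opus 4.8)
The plan is to deduce Theorem \ref{max-arc} directly from the two lemmata that precede it, since together they perform a complete case analysis on the dual minimum distance $d^\perp$ of the code. First I would observe that any linear code has a well-defined dual minimum distance, and by the standing assumption of the preliminaries (no zero column in the generator matrix $A$), the dual code $C^\perp$ cannot contain a codeword of weight $1$, so $d^\perp \ge 2$. Thus for an extended cyclic code $C$ with parameters $[qt-q+t,3,qt-q]$ — rewritten with $N=(q+1)(t-1)$ so that $N+1=qt-q+t$ and $N+1-t=qt-q$ — there are exactly two possibilities: either $d^\perp = 2$ or $d^\perp \ge 3$.

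The case $d^\perp = 2$ is excluded outright by Lemma \ref{codenonprojective}, which asserts that no such extended cyclic code exists. Hence the only surviving case is $d^\perp \ge 3$, and in that case Lemma \ref{ext-max-arc} applies verbatim, yielding that $C$ is equivalent to a code obtained from a cyclic Denniston maximal arc. Concatenating these two facts gives the theorem with essentially no further computation. I would also note in passing the bookkeeping identity $qt-q+t = (q+1)(t-1)+1 = N+1$ so that the reader sees the reparametrization matches the hypotheses of the lemmata exactly, and that the condition $1<t<q$ is carried through unchanged.

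Since the structural work is already done, I do not anticipate a genuine obstacle here; the proof is a short dichotomy argument. If there is any delicate point, it is merely the verification that $d^\perp \ge 2$ always holds, i.e.\ that the convention of admitting no zero columns (imposed in Section \ref{sec-prelim}) is legitimately in force for an extended cyclic code with these parameters. This is immediate: a code of minimum distance $qt-q > 0$ and length $N+1$ cannot have a zero coordinate position realized by a zero column of $A$ without contradicting the fact that $A$ generates all of $C$, and in any event the explicit form of $A$ exhibited in Lemma \ref{ext-max-arc} has no zero column. With that remark the argument is complete.

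\begin{proof}
Write $N=(q+1)(t-1)$, so that the parameters $[qt-q+t,3,qt-q]$ become $[N+1,3,N+1-t]$. Let $C$ be an extended cyclic code over $F$ with these parameters. Since no column of its generator matrix is zero, the dual code $C^\perp$ has no codeword of weight $1$, so $d^\perp \ge 2$; hence either $d^\perp = 2$ or $d^\perp \ge 3$. By Lemma \ref{codenonprojective}, the case $d^\perp = 2$ does not occur. Therefore $d^\perp \ge 3$, and Lemma \ref{ext-max-arc} shows that $C$ is equivalent to a code obtained from a cyclic Denniston maximal arc.
\end{proof}
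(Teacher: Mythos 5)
Your proposal is correct and is essentially the paper's own argument: the authors state Theorem \ref{max-arc} as an immediate consequence of Lemmata \ref{ext-max-arc} and \ref{codenonprojective}, exactly the dichotomy on $d^\perp$ that you carry out. Your extra remark ruling out $d^\perp=1$ via the explicit generator matrix $A$ (whose last row is all ones) is a harmless and correct bit of bookkeeping that the paper leaves implicit.
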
 

\begin{proof} 
First of all, we note that the case $t=2$ is a hyperoval case and it was considered in Theorem \ref{ext-MDS}, 
so we can assume that $t\ge 4$.  
Let $C$ be an  extended cyclic $[N+1,3,N+1-t]$-code. 
Let $C'$ denote the code obtained by puncturing the extended coordinate in $C$. Then $C'$ is a cyclic $[N,3]$-code.  

Recall that $S' =\langle \alpha \rangle$ and  $S =\langle \beta \rangle$, where $\alpha = \theta^{(q-1)/(t-1)}$, $\beta = \theta^{q-1}$ and  $\theta$ is a primitive element of the field $K$. Furthermore, $\alpha = \mu v$ for a primitive 
element $\mu$  of $\mathbb{F}_t$ and $\langle v\rangle = S$. 
Considering the set $S'$ with respect to the action of conjugation, we can write $S'= S_0 \cup S_1\cup S_2$, 
where $S_0 = \{u \in S' \mid u=\bar{u}\} = S' \cap F$, $S_1 \cap F = \varnothing$, $S_2= \{\bar{u} \mid u\in S_1\}$,  
$S_1 \cap S_2 = \varnothing$. Then 
$$x^N-1 =  \prod_{u\in S_0} (x-u) \prod_{u\in S_1} (x-u)(x-\bar{u}).$$
Note that $(x-u)(x-\bar{u}) = x^2-T(u)x+1\in F[x]$ is irreducible over $F$ for all $u\in S_1$.  
Consequently, every polynomial of degree 3 over $F$ dividing $x^N-1 $ must be of the form 
$(x-a)(x-b)(x-c)$ with distinct $a, b, c$ from $\mathbb{F}_t^*$, or $(x-\gamma)(x-\bar{\gamma})(x-a)$ with 
$\gamma \in S_1$, $a \in \mathbb{F}_t^*$.  We consider these two cases. 

1) Let $(C')^\perp$ be a cyclic $[N,N-3]$-code with generator polynomial $(x-a)(x-b)(x-c)$ with distinct $a, b, c$ 
from $\mathbb{F}_t^*$. 
Then by \cite[Theorem 4.4.3]{Huffman} we have that $(C')^\perp$ is a cyclic code with a parity-check matrix 
$$
B=\left(
\begin{array}{ccccc}
 1 & a & a^2 & \dots & a^{N-1}   \\
 1 & b & b^2 & \dots & b^{N-1}   \\
 1 & c & c^2 & \dots & c^{N-1}  
\end{array}
\right). 
$$
Hence $C'$ has generator matrix $B$. Since $C$ is an extended code of $C'$,  $C$ has a generator matrix of  
the following form: 
$$
A=\left(
\begin{array}{cccccc}
 1 & a & a^2 & \dots & a^{N-1}  & * \\
 1 & b & b^2 & \dots & b^{N-1}  & *  \\
 1 & c & c^2 & \dots & c^{N-1}  & * 
\end{array}
\right). 
$$     
Since $a^{t-1} =b^{t-1} =c^{t-1} =1$, every column $(a^i,b^i,c^i)^T$ in the matrix $A$ is repeated at least $q+1>t$ 
times, but it is not possible by Lemma  \ref{main}, since the minimum distance of $C$ is $N+1-t$.

2) Let $(C')^\perp$ be a cyclic $[N,N-3]$-code with generator polynomial $(x-\gamma)(x-\bar{\gamma})(x-a)$ 
with $\gamma \in S_1$, $a \in \mathbb{F}_t^*$. 
Define the following $2\times (q+1)$ matrix  over $K$: 
$$
L=\left(
\begin{array}{ccccc}
1 & \gamma & \gamma^2 & \dots & \gamma^{N-1}  \\
1 & a & a^2 & \dots & a^{N-1}
 \end{array}
\right).  
$$
Then by \cite[Theorem 4.4.3]{Huffman} we have 
$$ (C')^\perp = \{ c \in F^N \mid L c^T =0 \}. $$ 

 Let $\xi \in K \setminus F$ and $\gamma^i =x_i+y_i\xi$, where $x_i$ and  $y_i$ are from $F$. 
 Then $(C')^\perp$ is a cyclic code with a parity-check matrix 
$$
B=\left(
\begin{array}{cccc}
 x_0 & x_1 & \dots & x_{N-1}   \\
 y_0 & y_1 & \dots & y_{N-1}  \\
  1 & a & \dots & a^{N-1}  
\end{array}
\right). 
$$
This means that  $C'$ is a cyclic code over $F$ with generator matrix $B$. 

We claim that  $a = 1$. Indeed, 
assume that $a\ne 1$. Then $\sum_{i=0}^{t-2} a^i=0$, and consequently $\sum_{i=0}^{N-1} a^i=0$. In addition,   
$\sum_{i=0}^{N-1} \gamma^i = 0$. Hence the extended code  $C$ has the following generator matrix 
$$
A=\left(
\begin{array}{ccccc}
 x_0 & x_1 & \dots & x_{N-1}  & 0 \\
 y_0 & y_1 & \dots & y_{N-1}  & 0\\
  1 & a & \dots & a^{N-1}  & 0
\end{array}
\right).  
$$     

The nonzero columns of the matrix $A$ represent points $(x_i,y_i,a^i)$ in the projective plane $PG(2,q)$.  
Equivalently, in other representation, they are the points $(\gamma^i,a^i) \in PG(2,q)$, or  $((\gamma/a)^i,1)\in PG(2,q)$. 
Let $r$ be the smallest positive integer such that $(\gamma/a)^r=1$. 
Then in the sequence of the points $(\gamma/a)^i$, $0\le i\le N-1$, there are $r$ distinct points, repeated $N/r$ 
times. Therefore, the columns of the matrix $A$ represent distinct $r$ points in $PG(2,q)$,  repeated $N/r$ times, 
and one zero column more. Denote by $T$ the set of these distinct $r$ points in $PG(2,q)$. 

Assume that any line in $PG(2,q)$ intersects the set $T$ in maximum $s$ points. Then $T$ is an $\{r;s\}$-arc, 
and the standard bound for this arc gives
\begin{equation}
\label{arc}
r \le (q+1)(s-1) +1. 
\end{equation}

On the other hand, the codewords of the code $C$ can be written as $(a',b',c') A$, where $a',b',c' \in F$. 
Let a projective line $a'x+b'y+c'z=0$  intersect $T$ in $s$ points. Then the codeword $(a',b',c') A$ has 
$s\cdot \frac{N}{r} +1$ zero coordinates (the extra one comes from the zero column of $A$). 
Since the minimum distance of $C$ is $N+1-t$, we have 
$$s\cdot \frac{N}{r} +1 \le t,$$
which implies 
$$s\cdot \frac{N}{t-1} \le r,$$
$$s\cdot (q+1) \le r.$$
But the last inequality contradicts inequality (\ref{arc}).

Therefore, $a=1$ and $C$ has the following generator matrix 
$$
A=\left(
\begin{array}{ccccc}
 x_0 & x_1 & \dots & x_{N-1}  & 0 \\
 y_0 & y_1 & \dots & y_{N-1}  & 0\\
  1 & 1 & \dots & 1  & 1
\end{array}
\right). 
$$     

Now we will show that  the columns of the matrix $A$ are distinct, which means that the elements 
$\gamma^{i}$, $0\le i \le N-1$, are distinct. Indeed, assume that they are not distinct, and 
let $r$ be the smallest positive integer such that $\gamma^r=1$. Then $r<N$ and 
in the sequence of the elements $\gamma^i$, $0\le i\le N-1$, there are $r$ distinct elements, repeated $N/r$ 
times. Therefore, the columns of the matrix $A$ represent distinct $r$ points in $PG(2,q)$,  repeated $N/r \ge 2$ times, 
and one extra point $(0,0,1)^T$. Denote by $T$ the set of these distinct $r$ points in $PG(2,q)$. 

Assume that any line in $PG(2,q)$ intersects the set $T$ in maximum $s$ points. Then $T$ is an $\{r;s\}$-arc, 
and the standard bound for this arc gives
$$r \le (q+1)(s-1) +1. $$

On the other hand, the codewords of the code $C$ can be written as $(a',b',c') A$, where $a',b',c' \in F$. 
Let a projective line $a'x+b'y+c'z=0$  intersect $T$ in $s$ points. Then the codeword $(a',b',c') A$ has 
$s\cdot \frac{N}{r} $ or $s\cdot \frac{N}{r} +1$ zero coordinates. 
Since the minimum distance of $C$ is $N+1-t$, we have 
$$s\cdot \frac{N}{r}  \le t,$$
which implies 
$$2s \le t.$$
Futhermore, 
$$s\cdot \frac{N}{t}  \le r \le (q+1)(s-1) +1,$$
$$s\cdot \frac{(q+1)(t-1)}{t}   \le (q+1)(s-1) +1,$$
$$s (q+1)(1- \frac{1}{t})   \le (q+1)s-(q+1) +1,$$
$$-s (q+1)\frac{1}{t}   \le -q,$$
$$s(q+1) \ge qt \ge 2qs,$$
a contradiction. 

Therefore, the elements $\gamma^{i}$, $0\le i \le N-1$, are distinct, the order of $\gamma$ is equal to $N$, 
$\gamma$ generates $S'$ and the set 
$\{0, 1, \gamma^{1}, \cdots, \gamma^{N-1}\}$ is a cyclic Denniston maximal arc. \qed 
\end{proof}

\section{Cyclic codes and ovoids} 
\label{sec-ovoids} 

In this section we show that the results from previous sections can be extended to ovoids.  

In $PG(n,q)$, $n\ge 3$,  a set $\mathcal K$ of $k$ points no three of which are collinear is called a $k$-{\it cap}. 
For any $k$-cap $\mathcal K$ in $PG(3,q)$ with $q\ne 2$, the cardinality $k$ satifies $k\le q^2+1$. 
A $(q^2+1)$-cap of  $PG(3,q)$, $q\ne 2$, is called an {\it ovoid}.  Any plane of $PG(3,q)$ meets an ovoid in 
$1$ or $q+1$ points \cite{Hir}. 

A linear $[q^2+1,4]$-code is called an {\it ovoid code} if the columns of its generator matrix $G$ constitute 
an ovoid in $PG(3,q)$.

Let $V$ be a finite dimensional vector space over a field $F$ of characteristic 2. 
A {\it quadratic form} on $V$ is a mapping $Q: V \rightarrow F$ such that  
 
\begin{enumerate} 
\item 
$Q(\lambda x) = \lambda^2 Q(x)$ for all $\lambda \in F$, $x\in V$, and 
\item 
$B(x,y) = Q(x+y)+Q(x)+Q(y)$ is a bilinear from.  
\end{enumerate} 
A quadratic form is {\it non-degenerate} if the property $B(x,y)=Q(x)=0$ for all $y\in V$ implies $x=0$. 
A vector $x\in V$ is {\it singular} if $Q(x)=0$. 
The set of singular points of $Q$ defines a {\it quadric} in the projective space $P(V)$ 
($P(V)$ is a projective space obtained from $V\setminus \{ 0\}$ by identifying nonzero vectors 
$v\in V$ with $\lambda v$ for all $\lambda \in F$). 
 
Let $Q$ be a non-degenerate quadratic form on $4$-dimensional vector space $V$ over $F$. 
The coordinate frame can be chosen so that $Q$ is equivalent to one of the following two expressions: 
\begin{enumerate} 
\item 
$x_0x_1+x_2x_3$, or 
\item 
$x_0^2+ax_0x_1+x_1^2+x_2x_3$, where $a\in F$ and the polynomial $\xi^2+a\xi+1$ is irreducible over $F$.   
\end{enumerate} 
In the former case the quadratic form $Q$ defines a {\it hyperbolic} quadric in $PG(3,q)$, 
and in the latter case $Q$ defines an {\it elliptic} quadric.  
The hyperbolic quadric in $PG(3,q)$ contains $(q+1)^2$ points, and the elliptic quadric in $PG(3,q)$ 
contains $q^2+1$ points \cite{Bierbrauer}. The elliptic quadric in $PG(3,q)$ is an ovoid. 
 

The next theorem provides a coordinate-free presentation of the elliptic quadric in $PG(3,q)$. 
 
 \begin{theorem}  
 \label{quadric}
Let $E \supset K \supset F$ be a chain of finite fields, $|E|=q^4$, $|K|=q^2$, $|F|=q$, $q=2^m$.  Then 
$$Q(x)= Tr_{K/F}(N_{E/K}(x))$$ 
is a non-degenerate quadratic form on $4$-dimensional vector space  $E$ over $F$. 
Moreover,  the set 
$$\mathcal{O}=\{u\in E \mid N_{E/K}(u)=1\}=\{u\in E \mid u^{q^2+1}=1\}$$ 
determines an elliptic quadric in $PG(3,q)$. 
\end{theorem}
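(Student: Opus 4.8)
The plan is to make every map explicit and then verify the two axioms of a quadratic form directly, treating the polar form as the polarization of the norm. First I would write $N_{E/K}(x)=x^{1+q^2}$ and, for $w\in K$, $Tr_{K/F}(w)=w+w^q$, so that $Q(x)=x^{1+q^2}+x^{q+q^3}$. The homogeneity axiom $Q(\lambda x)=\lambda^2 Q(x)$ for $\lambda\in F$ is immediate, since $\lambda^{q^2}=\lambda$ gives $N_{E/K}(\lambda x)=\lambda^2 N_{E/K}(x)$ and $Tr_{K/F}$ is $F$-linear. For the bilinearity axiom I would expand, in characteristic $2$, $N_{E/K}(x+y)+N_{E/K}(x)+N_{E/K}(y)=xy^{q^2}+x^{q^2}y$; applying $Tr_{K/F}$ and using $Tr_{E/F}=Tr_{K/F}\circ Tr_{E/K}$, this collapses to the clean expression $B(x,y)=Tr_{E/F}(xy^{q^2})$, which is visibly $F$-bilinear and symmetric (the latter because $Tr_{E/F}$ is Frobenius-invariant). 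This reformulation of $B$ is the technical heart of the argument and the step I would set up most carefully.

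With $B(x,y)=Tr_{E/F}(xy^{q^2})$ in hand, non-degeneracy is essentially free. If $B(x,y)=0$ for all $y\in E$, then since $y\mapsto y^{q^2}$ permutes $E$ we obtain $Tr_{E/F}(xz)=0$ for all $z\in E$, and non-degeneracy of the trace form of the separable extension $E/F$ forces $x=0$. In particular the radical of $B$ is $\{0\}$, so any $x$ with $B(x,y)=0$ for all $y$ (whether or not $Q(x)=0$) must be zero, which is exactly the non-degeneracy demanded by the definition.

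To identify the type of the quadric I would count points. Since $Tr_{K/F}(w)=w+w^q$ vanishes precisely on $F$, we have $Q(x)=0\iff N_{E/K}(x)\in F$, i.e. for $x\neq 0$, $N_{E/K}(x)\in F^*$. As $N_{E/K}\colon E^*\to K^*$ is surjective with kernel of order $q^2+1$, the preimage of $F^*$ has order $(q-1)(q^2+1)$, yielding $q^2+1$ projective points of $\{Q=0\}$. Because the only non-degenerate quadrics in $PG(3,q)$ are the hyperbolic one, with $(q+1)^2$ points, and the elliptic one, with $q^2+1$ points, the quadric is elliptic. Finally I would match this with $\mathcal{O}$: each $u\in\mathcal{O}$ satisfies $N_{E/K}(u)=1\in F$, so $\mathcal{O}\subseteq\{Q=0\}$; moreover distinct elements of $\mathcal{O}$ are never $F^*$-multiples of one another (if $u'=\lambda u$ with $\lambda\in F^*$ then $1=\lambda^2$, hence $\lambda=1$), so $\mathcal{O}$ contributes $q^2+1$ distinct projective points. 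Comparing with the exact count $q^2+1$ shows $\mathcal{O}$ is precisely the elliptic quadric $\{Q=0\}$.

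I expect the main obstacle to be organizational rather than deep: getting the polar-form computation to telescope into $Tr_{E/F}(xy^{q^2})$ requires care with the two nested extensions and the characteristic-$2$ cancellations, but once that identity is secured both the non-degeneracy and the point count follow quickly.
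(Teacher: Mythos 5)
Your proof is correct, and it follows the same overall skeleton as the paper's: polarize the norm, observe that $Q(x)=0$ iff $N_{E/K}(x)\in F$, and conclude ellipticity from the point count $q^2+1$ versus $(q+1)^2$. There are two places where your route genuinely diverges, both to your credit. First, for non-degeneracy the paper stops at $B(x,y)=Tr_{K/F}(xy^{q^2}+x^{q^2}y)$ and simply asserts that this form is non-degenerate; you push one step further, noting $xy^{q^2}+x^{q^2}y=Tr_{E/K}(xy^{q^2})$ so that $B(x,y)=Tr_{E/F}(xy^{q^2})$ by transitivity of the trace, and then invoke non-degeneracy of the trace form of the separable extension $E/F$. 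This actually supplies the detail the paper omits, and it establishes the stronger statement that the radical of $B$ is trivial (which subsumes the paper's weaker definition of non-degeneracy requiring $B(x,y)=Q(x)=0$). Second, to identify $\{Q=0\}$ with the $F^*$-multiples of $\mathcal{O}$, the paper uses the polar decomposition of $E^*$ over $K$ (writing $x=\lambda u$ with $\lambda\in K$, $u^{q^2+1}=1$, and deducing $\lambda\in F$ from $\lambda^2=N_{E/K}(x)\in F$), whereas you count the preimage $N_{E/K}^{-1}(F^*)$ of order $(q-1)(q^2+1)$ and match cardinalities with the $q^2+1$ projective points contributed by $\mathcal{O}$. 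Both arguments are equally elementary; the paper's polar-coordinate version gives the slightly sharper pointwise statement that every zero of $Q$ is an $F$-multiple of an element of $\mathcal{O}$, while your counting version avoids introducing the polar decomposition at all. No gaps.
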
 

\begin{proof} 
We have 
$$N_{E/K}(x) = x\cdot x^{q^2} = x^{q^2+1},$$ 
$$Q(\lambda x) = Tr_{K/F}(N_{E/K}(\lambda x)) =\lambda^2 Tr_{K/F}(N_{E/K} (x)) =
\lambda^2 Q(x) \ {\rm for \ all} \  \lambda\in F,$$
$$N_{E/K}(x+y) = (x+y) (x+y)^{q^2} = xx^{q^2} + xy^{q^2} +x^{q^2}y +yy^{q^2},$$ 
\begin{eqnarray*}
Q(x+y)+Q(x)+Q(y) &=& Tr_{K/F}(N_{E/K}(x+y)+N_{E/K}(x)+N_{E/K}(y)) \\
                              &=& Tr_{K/F}(xy^{q^2} +x^{q^2}y).
\end{eqnarray*}
It is clear that $B(x,y) = Tr_{K/F}(xy^{q^2} +x^{q^2}y)$ is a bilinear non-degenerate form. 

Now we show that 
$$\{x\in E \mid Q(x)=0\} = \{\lambda u \in E \mid \lambda\in F, u^{q^2+1}=1\}.$$ 
Since $[K:F]=2$, $Q(x)=Tr_{K/F}(N_{E/K}(x)) =0$ if and only if $N_{E/K}(x)\in F$. 
Further, any element $x\in E$ can be written as $x=\lambda u$, 
where $\lambda \in K$, $u\in E$, $u^{q^2+1} =1$ (polar presentation of elements of $E$ over $K$). 
Then $\lambda^2 = N_{E/K}(x) \in F$. Hence $\lambda \in F$.   

Therefore, up to multiplication by elements from $F$, all solutions of $Q(x)=0$ are elements of $\mathcal{O}$. 
Since $|\mathcal{O}|=q^2+1$, $\mathcal{O}$ determines an elliptic quadric in $PG(3,q)$.  \qed 
\end{proof}
 
Note that a more general version of the previous quadratic form is given in \cite{Shult}, but in our particular case 
our presentation looks simpler  and it is very suitable for our next investigations. 
 
Ding \cite[Section 13.3]{Ding} introduced a family of cyclic codes with parameters $[q^2+1,4,q^2-q]$ and stated 
without proof that they can be  obtained from elliptic quadrics. The next theorem proves this statement and 
shows a very natural connection between  these cyclic codes and elliptic quadrics. 
 
 \begin{theorem}  
\label{ovoid}
A cyclic code over $\mathbb{F}_q$ with parameters $[q^2+1,4,q^2-q]$    
is equivalent to an ovoid code obtained from an elliptic quadric in $PG(3,q)$. 
\end{theorem}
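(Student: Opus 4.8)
The plan is to mirror the strategy used for hyperovals in Theorem~\ref{ext-MDS} and maximal arcs in Theorem~\ref{max-arc}, now working inside the four-dimensional setting provided by Theorem~\ref{quadric}. Let $C$ be a cyclic code over $F$ with parameters $[q^2+1,4,q^2-q]$. First I would exhibit a concrete generator matrix for $C$ whose columns are coordinate vectors of the ovoid $\mathcal{O}=\{u\in E\mid u^{q^2+1}=1\}$. Since the code has length $q^2+1$, it is natural to index coordinates by the $(q^2+1)$st roots of unity in $E$, i.e.\ by the elements of $\mathcal{O}$ itself. Fixing a primitive element $\theta$ of $E$ and setting $\eta=\theta^{q^2-1}$, the group $\mathcal{O}=\langle\eta\rangle$ is cyclic of order $q^2+1$. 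I would determine the defining zeros of the cyclic code from the factorization $x^{q^2+1}-1$ over $F$ and argue, as in the earlier proofs, that the parameters force the generator (or parity-check) polynomial to have exactly the right degree, so that the rows of a generator matrix are given by the $F$-coordinates of $\eta^0,\eta^1,\dots,\eta^{q^2}$ relative to an $F$-basis of $E$.

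The key computation is to translate ``the columns form an ovoid'' into the language of Lemma~\ref{main} together with Theorem~\ref{quadric}. By Lemma~\ref{main}, the multi-set $\mathcal{P}$ of columns of a $[q^2+1,4,q^2-q]$-code generator matrix satisfies the condition that, after projecting to $PG(3,q)$, every hyperplane meets $\mathcal{P}$ in at most $q+1$ points. The defect $t=(q^2+1)-(q^2-q)=q+1$ is precisely the intersection number an ovoid has with a secant plane; this is the first hint that the columns constitute an ovoid. I would then show the columns are distinct points of $PG(3,q)$ (projectivity), so that $\mathcal{P}$ is a genuine $(q^2+1)$-set in which no plane contains more than $q+1$ points. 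A $(q^2+1)$-cap in $PG(3,q)$ is an ovoid, and to upgrade ``every plane meets it in at most $q+1$ points'' to ``it is a cap,'' I would invoke the standard counting argument (as in the proof of Lemma~\ref{codenonprojective}) that no three points are collinear: a line carrying three points would, together with any further point, lie in a plane meeting the set in too many points, contradicting the bound. Thus $\mathcal{P}$ is an ovoid.

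The final step is to identify this ovoid with the elliptic quadric of Theorem~\ref{quadric}. Here I would use the explicit description $\mathcal{O}=\langle\eta\rangle$ of the $(q^2+1)$st roots of unity, which is exactly the point set singled out in Theorem~\ref{quadric} as determining an elliptic quadric in $PG(3,q)$. Since the columns of the generator matrix of $C$ are the $F$-coordinate vectors of these very roots of unity, the code $C$ is, up to coordinate permutation and column scaling, the ovoid code built from $\mathcal{O}$. Equivalence of codes only requires matching the underlying multisets of projective points up to the action of $PGL(4,q)$ and a monomial transformation, and a cyclic relabelling of the roots of unity realizes the cyclic automorphism, so $C$ is equivalent to the elliptic-quadric ovoid code.

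The main obstacle I anticipate is pinning down the generator polynomial of the cyclic code precisely enough to guarantee that the columns are, up to scaling, the full set of $(q^2+1)$st roots of unity rather than some other $4$-dimensional configuration with the same weight distribution. In the $[q+2,3,q]$ and $[qt-q+t,3,qt-q]$ cases the defining zeros were forced by a short, explicit factorization; the $q^2+1$ case requires controlling a degree-$4$ cyclotomic factor over $F$, and verifying that the distinctness of the columns (equivalently $d^\perp\ge 3$, the cap condition that no three columns are dependent) follows from the minimum distance. I would expect to handle this by the same dual-code distance argument used in Theorem~\ref{ext-max-arc}, showing that repeated or collinear columns would create low-weight dual words incompatible with the stated parameters.
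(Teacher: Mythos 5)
Your overall skeleton matches the paper's: use cyclicity to write the generator polynomial as $(x^{q^2+1}-1)$ divided by the degree-$4$ minimal polynomial of some $\gamma$ with $\gamma^{q^2+1}=1$, conclude that the columns of a generator matrix are the $F$-coordinate vectors of the powers of $\gamma$, show these exhaust $\mathcal{O}=\{u\in E\mid u^{q^2+1}=1\}$, and finish with Theorem~\ref{quadric} to identify $\mathcal{O}$ as an elliptic quadric (this last step matters, since not every ovoid in $PG(3,q)$ is an elliptic quadric). Where you diverge is the middle: the paper does not run the geometric counting at all --- it simply cites the known fact that \emph{every} linear $[q^2+1,4,q^2-q]$ code over $\mathbb{F}_q$ is a projective two-weight code, hence an ovoid code by Calderbank--Kantor, which delivers both distinctness of the columns and the cap property in one stroke. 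Your plan to derive the cap property from the $PG(3,q)$ analogue of Lemma~\ref{main} plus the ``line with $3$ points forces a plane with too many points'' count is correct and self-contained, and is arguably more in the spirit of Lemma~\ref{codenonprojective}; but note that Lemma~\ref{main} as stated in the paper is only for $3$-dimensional codes, so you would need to state its $PG(3,q)$ version.

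The genuine gap is the projectivity step, which you yourself flag as the ``main obstacle'' but then dispose of by saying you would use ``the same dual-code distance argument as in Theorem~\ref{ext-max-arc}.'' That does not work as stated: in Lemma~\ref{ext-max-arc} the condition $d^\perp\ge 3$ is a \emph{hypothesis}, and the actual exclusion of repeated columns required a separate, nontrivial counting lemma (Lemma~\ref{codenonprojective}); the stated parameters $[q^2+1,4,q^2-q]$ say nothing directly about $d^\perp$. You must rule out that $\gamma$ has order $d<q^2+1$, in which case each column occurs with multiplicity $m=(q^2+1)/d\ge 2$ (note $F^*\cap\mathcal{O}=\{1\}$, so coincidence of projective points is coincidence of field elements). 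The gap is fillable inside your own framework: the plane-intersection bound $\le q+1$ applied to the \emph{multiset} forces every line to carry at most $2$ points of the multiset (the same count $a q\le 2q$), hence every multiplicity is at most $2$; since $m$ divides the odd number $q^2+1$, $m=1$. Either supply this (or an equivalent) argument explicitly, or invoke the projectivity result the paper cites; as written, the proposal asserts distinctness without a proof that applies.
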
 

\begin{proof} 
Let $E \supset K \supset F$ be a chain of finite fields, $|E|=q^4$, $|K|=q^2$, $|F|=q$, $q=2^m$. 
Consider the set 
$$\mathcal{O} =\{u\in E \mid N_{E/K}(u)=1\}= \{u\in E \mid u^{q^2+1}=1\}.$$ 
The factorization of $x^{q^2+1}-1$ over $E$ is given by 
$$x^{q^2+1}-1 = \prod_{u\in \mathcal{O}} (x-u)= (x-1) \prod_{u\in \mathcal{O}, u\ne 1} (x-u).$$ 
Let $f(x) \in F[x]$ be an irreducible divisor of $x^{q^2+1}-1$ which has a root $\gamma \in  \mathcal{O}$, 
$\gamma \ne 1$. Then $\sigma(\gamma)$ is also a root of $f(x)$ for all $\sigma \in Gal(E/F)$. 
Therefore,  $\gamma^q$, $\gamma^{q^2}$ and $\gamma^{q^3}$ are roots of $f(x)$. 
We claim that elements $\gamma$, $\gamma^q$, $\gamma^{q^2}$, $\gamma^{q^3}$ are distinct. 
Indeed, if they are not distinct, then $\gamma =\gamma^{q^2}$. 
Therefore, $\gamma \in K$ and $\gamma$ belongs to the unit circle in polar presentation of $E$ over $K$. 
Hence $\gamma =1$, a contradiction. 
Consequently, every polynomial of degree 4 over $F$ dividing $x^{q+1}-1 $ must be of the form 
$(x-\gamma)(x-\gamma^q)(x-\gamma^{q^2})(x-\gamma^{q^3})$ with $\gamma \in \mathcal{O} \setminus \{1\}$.  

Let $C$ be a cyclic $[q^2+1,4,q^2-q]$-code. Then the parity-check polynomial of $C$ is 
$(x-\gamma)(x-\gamma^q)(x-\gamma^{q^2})(x-\gamma^{q^3})$ for some  $\gamma \in \mathcal{O} \setminus \{1\}$.
Then $C^\perp$ is a cyclic $[q+1,q-2]$-code with generator polynomial 
$(x-\gamma^{-1})(x-\gamma^{-q})(x-\gamma^{-q^2})(x-\gamma^{-q^3})$. 
Define the $1\times (q^2+1)$ matrix  
$$
L=\left(
\begin{array}{ccccc}
1 & \gamma^{-1} & \gamma^{-2} & \dots & \gamma^{-q^2}  
 \end{array}
\right).  
$$
Then by \cite[Theorem 4.4.3]{Huffman} we have 
$$ C^\perp = \{ c \in F^{q^2+1} \mid L c^T =0 \}. $$ 

Let $\{ 1, \xi\, \zeta, \eta \}$ be a basis of $K$ over $F$. 
Let $\gamma^{-i} =x_i+y_i\xi +z_i\zeta + w_i\eta$, where $x_i, y_i , z_i, w_i\in F$. 
Then $C^\perp$ is a cyclic code with the parity-check matrix 
$$
A=\left(
\begin{array}{cccc}
 x_0 & x_1 & \dots & x_{q^2}   \\
 y_0 & y_1 & \dots & y_{q^2}   \\
 z_0 & z_1 & \dots & z_{q^2}   \\
  w_0 & w_1 & \dots & w_{q^2}   
\end{array}
\right). 
$$
Hence $C$ is a cyclic code with  generator matrix $A$. 

Any linear code over $\mathbb{F}_q$ with parameters $[q^2+1,4,q^2-q]$  is a projective two-weight  code 
(\cite[p. 192]{Bierbrauer} and \cite{Ding2018}).  Hence, $C$ is an ovoid code \cite{Calderbank}. 
Therefore, the columns of the matrix $A$ are distinct, and $\gamma$ is a multiplicative generator of $\mathcal{O}$. 
Then Theorem \ref{quadric} implies  that $C$ is an ovoid code obtained from the elliptic quadric $\mathcal{O}$. 
\qed 
\end{proof}

\bigskip

{\bf Acknowledgments}

\medskip

The authors would like to thank Cunsheng Ding for valuable discussions and suggestions. 
The author is also grateful to the anonymous reviewers for their detailed comments that improved 
the presentation and quality of this paper.
This work was supported by UAEU grant 31S366.

%
%



\end{document}